\newtheorem{theorem}{Theorem}[section]
\newtheorem{proposition}[theorem]{Proposition}
\theoremstyle{definition}
\newtheorem{remark}[theorem]{Remark}
\def\calC{\mathcal{C}}
\def\calI{\mathcal{I}}
\def\calO{\mathcal{O}}
\def\calP{\mathcal{P}}
\def\calR{\mathcal{R}}
\def\calT{\mathcal{T}}
\DeclareMathOperator{\Hom}{Hom}
\DeclareMathOperator{\Der}{Der}
\DeclareMathOperator{\End}{End}
\DeclareMathOperator{\gr}{gr}
\DeclareMathOperator{\APoisson}{\textsl{APoisson}}
\DeclareMathAlphabet{\mathbbold}{U}{bbold}{m}{n}
\def\k{\mathbbold{k}}
\begin{document}

\title[Identities for deformation quantizations of almost Poisson algebras]{Identities for deformation quantizations\\ of almost Poisson algebras}

\author{Vladimir Dotsenko}

\address{ 
Institut de Recherche Math\'ematique Avanc\'ee, UMR 7501, Universit\'e de Strasbourg et CNRS, 7 rue Ren\'e-Descartes, 67000 Strasbourg CEDEX, France}

\email{vdotsenko@unistra.fr}

\date{}

\begin{abstract}
We propose an algebraic viewpoint of the problem of deformation quantization of the so called almost Poisson algebras, which are algebras with a commutative associative product and an antisymmetric bracket which is a bi-derivation but does not necessarily satisfy the Jacobi identity. From that viewpoint, the main result of the paper asserts that, by contrast with Poisson algebras, the only reasonable category of algebras in which almost Poisson algebras can be quantized is isomorphic to the category of almost Poisson algebras itself, and the trivial two-term quantization formula already gives a solution to the quantization problem.
\end{abstract}

\maketitle

\section{Introduction}

This short note discusses commutative associative algebras equipped with an anticommutative operation $a_1,a_2\mapsto \{a_1,a_2\}$ that is a biderivation:
\begin{gather*}
\{a_1a_2,a_3\} = \{a_1,a_3\}a_2 + a_1\{a_2,a_3\} ,\\
\{a_1,a_2a_3\} = \{a_1,a_2\}a_3 + a_2\{a_1,a_3\} 
\end{gather*}
(of course, since this operation is assumed anticommutative, it is enough to impose either of those identities). This class of algebras is known under several different names. To the best of our knowledge, such algebras were introduced independently by Cannas and Weinstein in \cite{MR1747916} under the name ``almost Poisson algebras'' and by Shestakov in \cite{MR1755366} under the name ``general Poisson algebras'' (later changed into ``generic Poisson algebras'' in \cite{MR3298985}); a prototypical example of such algebra is given by the algebra of smooth functions on a manifold $M$ equipped with the product operation and the operation $\{f,g\}=\Lambda(df,dg)$, for some bivector field $\Lambda$. We feel that the word ``almost'' indicates more clearly that these algebras are not necessarily Poisson, so we choose to use it throughout the paper.

A topic that has been attracting the attention of both mathematicians and physicists over the past few decades is deformation quantization of the commutative associative product of functions on a manifold in the direction of a Poisson bivector \cite{MR496157}, and in particular the celebrated theorem of Kontsevich about the existence of associative quantized products for arbitrary Poisson manifolds \cite{MR2062626}. In various questions of mathematical physics, the necessity of studying deformation quantization in directions of not necessarily Poisson bivectors has emerged, see, e.g. \cite{Bakas2014,MR3657740,MR1877309,MR3429384,Szabo:2019uV,MR3846952}. This makes one wonder what replaces associative algebras as the class of algebras in which the quantization takes place, see, for instance, the \texttt{MathOverflow} question of Jim Stasheff \cite{335295}. In this paper, we give an answer to this question. 

Our approach is inspired by the following algebraic viewpoint of deformation quantization. In an unpublished work, Livernet and Loday included the operad of Poisson algebras into a one-parametric family of operads that deforms it into the operad of associative algebras. Markl and Remm in \cite[Th.~5]{MR2225770} explained that deformation quantization can be recast in terms of the Livernet--Loday family of operads. The main feature of the latter family is that it deforms the Poisson operad ``in the direction of the Poisson bracket''; algebraically, it corresponds to studying operads containing the Lie operad and such that the associated graded with respect to the appropriate filtration is isomorphic to the Poisson operad. In fact, it follows from our recent work with Bremner \cite{MR4177579} that the Livernet--Loday family is the \emph{only} nontrivial family with this property, and hence from the algebraic point of view, the fact that Poisson algebras ``should be'' quantized into associative algebras can be made into a precise mathematical statement.

Applying the same approach to almost Poisson algebras, we found a result of a somewhat different flavour: an operad containing the ``almost Lie'' operad and such that the associated graded with respect to the appropriate filtration is isomorphic to the almost Poisson operad is necessarily isomorphic to the almost Poisson operad! As a consequence, the only reasonable algebraically defined category of algebras in which almost Poisson algebras can be quantized is isomorphic to the category of almost Poisson algebras itself. Moreover, the ``trivial'' two-term quantization 
 \[
a_1\star a_2=a_1a_2+\frac{\hbar}{2}\{a_1,a_2\}.
 \]
already gives a solution to the quantization problem. This quantization was previously discussed in the literature, see, e.g. \cite{MR2514853}.

\section{Filtrations of operads}

All vector spaces considered in this paper are defined over a ground field $\k$ of zero characteristic. All operads are assumed reduced ($\calP(0)=0$) and connected ($\calP(1)=\k$). This is a short note, and we shall not overload it with excessive recollections, giving just a short recollection of one less standard notion of a filtration of an operad by powers of an ideal. For further information, we refer the reader to \cite{MR3642294,MR2954392}. 

A \emph{(decreasing) filtration} of an operad $\calP$ is a collection $F^\bullet\calP=\{F^k\calP\}_{k\ge 0}$ of subobjects of $\calP$ such that $F^0\calP=\calO-P$, $F^{k+1}\calP\subset F^k\calP$ for all $k\ge 0$, and for all elements $\alpha\in F^k\calP$, $\alpha_i\in F^{k_i}\calP$, $i=1,\ldots,n$, and any structure map $\gamma$ of the operad $\calP$, we have 
 \[
\gamma(\alpha;\alpha_1,\ldots,\alpha_n)\in F^{k+k_1+\cdots+k_n}\calP.
 \] 
For a filtration, we can of course define the associated graded object
 \[
\gr_F\calP:=\bigoplus_{k\ge 0}F^k\calP/F^{k+1}\calP.
 \]
It is obvious that the operad structure on $\calP$ induces a well defined operad structure on $\gr_F\calP$, and we refer to $\gr_F\calP$ equipped with that structure as the \emph{associated graded operad of $\calP$ with respect to the filtration $F$}.

Examples of filtrations studied in this paper come from ideals. Let $\calI$ be an ideal of $\calP$. We define the filtration $F^\bullet_{\calI}\calP$, to be the filtration where the object
$F^k_{\calI}\calP$ is spanned by operad compositions where elements of $\calI$ are used at least $k$ times; formally, those are the compositions along rooted trees where for at least $k$ of the internal vertices of the tree we use elements of $\calI$. We call this filtration the \emph{filtration of $\calP$ by powers of the ideal $\calI$}, highlighting the fact that it is a direct analogue of filtrations by powers of ideals in commutative algebra.

Filtrations by powers of operadic ideals were previously used in \cite{MR3927168} where it is established that the associated graded of the operad of pre-Lie algebras for the filtration by powers of the ideal generated by the Lie bracket recovers the F-manifold identities of Hertling and Manin; the corresponding deformation quantization problem is discussed in \cite{MR4097911}.

\section{Prelude: the case of Poisson algebras}

Let us begin with the case of Poisson algebras, where all the ingredients of our argument can be seen quite well. 
It is well known that, for the filtration of the associative operad by powers of the ideal generated by the Lie bracket $\{a_1,a_2\}=a_1a_2-a_2a_1$, the associated graded is the Poisson operad. Moreover, it is possible to include the Poisson operad into a flat family of operads containing the associative operad: the following result goes back to unpublished work of Livernet and Loday (see \cite{MR2320771,MR3302959,MR2225770} for a proof).

\begin{proposition}\label{prop:LL}
Let $\calO_t$ be the operad over the ring $\k[t]$ generated by a commutative product $a_1a_2$ and an anticommutative bracket $\{a_1,a_2\}$ that satisfy the identities
\begin{align}
\label{eq:relationsLL}
&
\left\{ \quad
\begin{array}{l}
(a_1a_2)a_3 - a_1(a_2a_3) = t \{a_2,\{a_1,a_3\}\} ,
\\ {}
\{a_1a_2,a_3\} = \{a_1,a_3\}a_2 + a_1\{a_2,a_3\} ,
\\ {}
\{\{a_1,a_2\},a_3\} + \{\{a_2,a_3\},a_1\} + \{\{a_3,a_1\},a_2\} = 0.
\end{array}
\right.
\end{align}
The following results hold:
\begin{itemize}
\item The operad $\calO_t|_{t=0}$ is the Poisson operad, and the operad $\calO_t|_{t=1}$ is isomorphic to the associative operad.
\item The component $\calO_t(n)$ is a free $\k[t]$-module of rank $n!$.
\item If the ground field is quadratically closed, for each $t_0\ne 0$ the operad $\calO_t|_{t=t_0}$ is isomorphic to the associative operad.
\end{itemize}
\end{proposition}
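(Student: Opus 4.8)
The plan is to exhibit an explicit spanning set of size $n!$ for $\calO_t(n)$, prove it is a basis by a specialization argument, and then use the classification of the $t_0\neq 0$ fibres. First I would put the presentation into a manageable form: the first relation lets one rewrite any associator in terms of brackets, so by induction on the number of product nodes every element of $\calO_t(n)$ can be written as a $\k[t]$-linear combination of ``left-normed products of left-normed brackets of variables'', i.e. monomials of the shape $(L_1)(L_2)\cdots(L_k)$ where each $L_i$ is an iterated bracket; using the Leibniz rule (second relation) and anticommutativity these can be further normalized, and the Jacobi identity (third relation) lets one choose a Poincaré–Birkhoff–Witt-type normal form for the bracket part. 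Counting these normal-form monomials gives exactly $n!$ of them — this is the same count as for the Poisson operad, which is the model to keep in mind.

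The second step is to show these $n!$ monomials are $\k[t]$-linearly independent, so that $\calO_t(n)$ is free of rank $n!$. Here I would use the specialization $t=0$: the relations \eqref{eq:relationsLL} become precisely the defining relations of the Poisson operad, so there is a surjection $\calO_t|_{t=0}\twoheadrightarrow \Poisson$, and $\dim\Poisson(n)=n!$ with the images of our monomials forming the standard basis. A $\k[t]$-linear dependence among the monomials in $\calO_t(n)$ would, after reducing modulo $t$, give a dependence among the basis elements of $\Poisson(n)$ unless every coefficient is divisible by $t$; dividing through by $t$ and iterating (equivalently, invoking that a finitely generated module over $\k[t]$ with a spanning set whose image mod $t$ is a basis, of the right size, is free) forces all coefficients to vanish. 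This simultaneously proves freeness of rank $n!$ and identifies $\calO_t|_{t=0}$ with $\Poisson$.

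For the last two bullets, observe that the map $a_1a_2\mapsto \tfrac12(a_1\cdot a_2 + a_2\cdot a_1)$, $\{a_1,a_2\}\mapsto a_1\cdot a_2 - a_2\cdot a_1$ into the associative operad sends the relations \eqref{eq:relationsLL}$|_{t=1}$ to identities (the first becomes the associator identity, the other two are standard), giving a map $\calO_t|_{t=1}\to\As$; since both have dimension $n!$ in each arity and the map is surjective, it is an isomorphism. For general $t_0\neq 0$, over a quadratically closed field one rescales the bracket by $\sqrt{t_0}$: replacing $\{-,-\}$ by $t_0^{-1/2}\{-,-\}$ (and keeping the product) transforms the system \eqref{eq:relationsLL}$|_{t=t_0}$ into \eqref{eq:relationsLL}$|_{t=1}$, whence $\calO_t|_{t=t_0}\cong\calO_t|_{t=1}\cong\As$.

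The main obstacle is the first step: verifying that the normalization procedure really terminates and that the resulting normal-form monomials are genuinely a spanning set of size exactly $n!$ — i.e. that no further relations among them are forced by iterated use of \eqref{eq:relationsLL}. One must be careful that rewriting associators via the first relation does not interact badly with the Leibniz and Jacobi reductions; the cleanest way to control this is to set up a confluent rewriting system (a Gröbner-type argument for operads, as in \cite{MR2954392}) and check that the ambiguities resolve, with the $t=0$ Poisson count serving as the a priori lower bound that pins the rank down.
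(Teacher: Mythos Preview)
The paper does not actually prove this proposition: it attributes the result to unpublished work of Livernet and Loday and refers to \cite{MR2320771,MR3302959,MR2225770} for a proof. So there is no in-paper argument to compare against; your outline is broadly in the spirit of what those references do.

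That said, your second step contains a real gap. The parenthetical claim---that a finitely generated $\k[t]$-module with an $n!$-element spanning set whose image modulo $t$ is a basis must be free---is false: take $M=\k[t]/(t)$, generated by one element whose class modulo $t$ is a basis of $M/tM\cong\k$, yet $M$ is not free. Correspondingly, ``dividing through by $t$ and iterating'' is circular: from $t\cdot\sum g_i m_i=0$ in $\calO_t(n)$ you cannot deduce $\sum g_i m_i=0$ unless you already know $\calO_t(n)$ is $t$-torsion-free, which is exactly what is at stake. A single specialization at $t=0$ only shows that the kernel of $\k[t]^{n!}\twoheadrightarrow\calO_t(n)$ lies inside $t\cdot\k[t]^{n!}$; it does not force the kernel to vanish.

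You have also slightly mislocated the difficulty. Producing an $n!$-element spanning set is the routine part (the rewriting visibly terminates, and the count is the Poisson count); linear independence over $\k[t]$ is what genuinely needs work. The confluence/Gr\"obner argument you invoke in your last paragraph is the right tool, but note that once the overlaps are checked it gives a $\k[t]$-\emph{basis} outright, so your step~2 becomes superfluous rather than being rescued by it. An alternative is to use all specializations at once: the rescaling shows $\dim\calO_{t_0}(n)=n!$ for every $t_0$ in $\bar\k$, and then Smith normal form over the PID $\k[t]$ forces the kernel to be zero. Finally, a small normalization slip in your map to $\As$: with $a_1a_2\mapsto\tfrac12(a_1\cdot a_2+a_2\cdot a_1)$ and $\{a_1,a_2\}\mapsto a_1\cdot a_2-a_2\cdot a_1$, the first relation at $t=1$ fails by a factor of $4$; either drop the $\tfrac12$ on the product or send the bracket to $\tfrac12(a_1\cdot a_2-a_2\cdot a_1)$.
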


Recall \cite{MR496158} that a star product on a vector space $A$ is a $\k[[\hbar]]$-linear binary product $a,b\mapsto a\star b$ on $A[[\hbar]]$ which is commutative modulo $\hbar$ and associative. Writing for $a,b\in A$
 \[
a\star b=a\cdot_0b+\hbar a\cdot_1b+\hbar^2 a\cdot_2b+\cdots,
 \] 
it is immediate to see that $a\cdot_0b$ is a commutative associative product and $a\cdot_1b-b\cdot_1a$ is a Lie bracket that together with $a\cdot_0b$ makes $A$ into a Poisson algebra, so one can actually talk about star products on Poisson algebras. In \cite{MR2225770}, Markl and Remm established the following result.

\begin{proposition}[{\cite[Th.~5]{MR2225770}}]\label{prop:defQ}
The datum of a star product on a Poisson algebra $A$ is equivalent to the datum of a $\calO_{\hbar^2}$-algebra structure on $A[[\hbar]]$ for which 
\begin{gather}
a\star b+b\star a\equiv ab\pmod{\hbar},\\
a\star b-b\star a\equiv \hbar \{a,b\} \pmod{\hbar^2}.
\end{gather}
\end{proposition}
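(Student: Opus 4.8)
The plan is to make the correspondence explicit, exploiting that a $\k[[\hbar]]$-bilinear product which is commutative modulo $\hbar$ has a canonical decomposition into commutative and anticommutative parts. To a $\calO_{\hbar^2}$-algebra structure on $A[[\hbar]]$, with commutative product $\nu$ and anticommutative bracket $\lambda$, I associate $a\star b:=\nu(a,b)+\tfrac{\hbar}{2}\lambda(a,b)$; this is $\k[[\hbar]]$-bilinear, and $a\star b-b\star a=\hbar\,\lambda(a,b)$, so it is commutative modulo $\hbar$. Conversely, to a star product $\star$ I associate $\nu(a,b):=\tfrac12(a\star b+b\star a)$ and $\lambda(a,b):=\tfrac1\hbar(a\star b-b\star a)$ (the latter defined since $a\star b-b\star a$ is divisible by $\hbar$); these are respectively commutative and anticommutative and recombine to $\star$, and the two constructions are plainly mutually inverse. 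It then remains to prove: (a) associativity of $\star$ is equivalent to the three defining relations of $\calO_{\hbar^2}$ for $(\nu,\lambda)$; and (b) the two displayed normalizations say precisely that the $\hbar^0$-parts of $\nu$ and $\lambda$ are the product and the bracket of the Poisson algebra $A$, i.e.\ that the Poisson structure induced by $\star$ is the given one.

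For (a) I would expand $\mathrm{as}_\star(a,b,c):=(a\star b)\star c-a\star(b\star c)$ as a polynomial in $\hbar$ of degree $2$ in the operations, and sort the coefficients by the $S_3$-action permuting $a,b,c$. Already the involution $a\leftrightarrow c$ (which preserves the shape of an associator) shows the $\hbar^1$-coefficient is symmetric while the $\hbar^0$- and $\hbar^2$-coefficients are antisymmetric, so the vanishing of $\mathrm{as}_\star$ breaks into two independent conditions. Using only commutativity of $\nu$, the $\hbar^1$-coefficient equals $L(a,b,c)+L(b,c,a)$ with $L(a,b,c)=\lambda(\nu(a,b),c)-\nu(\lambda(a,c),b)-\nu(a,\lambda(b,c))$ the biderivation defect; since over a field of characteristic zero an expression equal to minus its cyclic shift must vanish, its vanishing is equivalent to $L\equiv0$, i.e.\ to the biderivation identity. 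The remaining condition reads $\nu(\nu(a,b),c)-\nu(a,\nu(b,c))=-\tfrac{\hbar^2}{4}\bigl(\lambda(\lambda(a,b),c)-\lambda(a,\lambda(b,c))\bigr)$; I would rewrite the right-hand bracket through the purely anticommutative identity $\lambda(\lambda(a,b),c)-\lambda(a,\lambda(b,c))=\mathrm{Jac}(a,b,c)-\lambda(b,\lambda(a,c))$, with $\mathrm{Jac}$ the Jacobiator, and then project onto the sign-isotypic component of $\Hom(A^{\otimes3},A)$. The associator of the commutative $\nu$ has zero such component, while $\lambda(b,\lambda(a,c))$ contributes a definite nonzero multiple of $\mathrm{Jac}$, so the projection forces $\mathrm{Jac}=0$; substituting the Jacobi identity back in, the condition becomes the Livernet--Loday associator relation at a nonzero multiple of $\hbar^2$, which by Proposition~\ref{prop:LL} is the relation of $\calO_{\hbar^2}$ up to a rescaling of the bracket. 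Running the same chain of implications backwards—plugging the three relations into the $\hbar$-expansion of $\mathrm{as}_\star$—shows they are also sufficient, so (a) holds.

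Claim (b) is then formal: writing $\star=a\cdot_0b+\hbar\,a\cdot_1b+\cdots$ as in the text, the $\hbar^0$-part of $\nu$ is $a\cdot_0b$, and asking that this be the product of $A$ is exactly $a\star b+b\star a\equiv ab\pmod\hbar$, while the $\hbar^0$-part of $\lambda$ is $a\cdot_1b-b\cdot_1a$, and asking that this be the bracket of $A$ is exactly $a\star b-b\star a\equiv \hbar\{a,b\}\pmod{\hbar^2}$; these are precisely the conditions for the induced Poisson algebra to be $A$.

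I expect the real obstacle to be the necessity half of (a): sufficiency is mere substitution, but deducing each of the three relations from the single equation $\mathrm{as}_\star=0$ requires controlling the decomposition of $\mathrm{as}_\star$ into $S_3$-isotypic pieces, and in particular checking that the sign-isotypic projection isolates the Jacobiator without interference from $\nu$. A slicker, more structural route that avoids this bookkeeping is to invoke the argument behind the third item of Proposition~\ref{prop:LL}: since $\hbar^2$ is a square in $\k[[\hbar]]$, the change of generators used there descends to an isomorphism between $\calO_{\hbar^2}$-algebra structures on $A[[\hbar]]$ and associative $\k[[\hbar]]$-algebra structures on $A[[\hbar]]$, which are automatically commutative modulo $\hbar$ and hence are exactly star products; one is then left only with the normalization bookkeeping of (b).
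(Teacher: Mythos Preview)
The paper does not give its own proof of this proposition; it is quoted from Markl and Remm, so there is nothing in the paper to compare against directly. Your polarization approach is the standard one (and is essentially what the cited reference does): split $\star$ into its commutative part $\nu$ and anticommutative part $\lambda$, expand the associator, and separate it by the $S_3$-action. Your computations check out: the ``$\hbar^1$'' (i.e.\ $(13)$-symmetric) part of $\mathrm{as}_\star$ really is $\tfrac{\hbar}{2}\bigl(L(a,b,c)+L(b,c,a)\bigr)$, and the order-three cyclic argument correctly forces $L=0$; the sign-isotypic projection of the $(13)$-antisymmetric part kills the $\nu$-associator (its cyclic sum vanishes for commutative $\nu$) and isolates a nonzero multiple of the Jacobiator, forcing Jacobi; and back-substitution then yields the Livernet--Loday associator relation. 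One small caveat: with the specific choice $\star=\nu+\tfrac{\hbar}{2}\lambda$ the parameter you obtain is $t=\hbar^2/4$, not $\hbar^2$. You correctly note that this is absorbed by a rescaling of the bracket via the third item of Proposition~\ref{prop:LL}, but you should then track that rescaling through the normalization conditions of part~(b); the bookkeeping is routine but should be stated. Your alternative ``slicker route'' via the explicit isomorphism behind Proposition~\ref{prop:LL} is also valid and indeed avoids the $S_3$-isotypic analysis.
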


This proposition suggests how to use operads to classify reasonable algebraic targets for deformation quantization ``in the direction of the Poisson bracket''. The word ``algebraic'' above makes it natural to assume that the situation is the same as in Propositions \ref{prop:LL} and \ref{prop:defQ}: we have an operad that is a flat deformation of the Poisson operad, like in the first of those propositions, and the associated graded operad for the filtration by powers of the ideal generated by the skew-symmetric generator is isomorphic to the Poisson operad, like in the second of those propositions.

\begin{proposition}\label{prop:defPoisson}
Suppose that $\calO$ is an operad generated by one binary operation $\cdot$ and such that the associated graded operad for the filtration by powers of the ideal generated by the operation $\{a_1,a_2\}=a_1\cdot a_2-a_2\cdot a_1$ is isomorphic to the Poisson operad. Then there exists $t_0\in\k$ such that $\calO\cong \calO_t|_{t=t_0}$. In particular, over a quadratically closed field, $\calO$ is isomorphic to either the Poisson operad or the associative operad.  
\end{proposition}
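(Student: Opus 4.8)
The plan is to bootstrap from the structural information encoded in the hypothesis on the associated graded operad. Since $\gr_F\calO\cong\Poisson$, each component $\calO(n)$ has dimension $n!$ and, more precisely, the filtration $F^\bullet$ gives $\calO(n)$ a basis adapted to the PBW-type basis of $\Poisson(n)$: products of ``Lie words'' in the $\{-,-\}$ bracket multiplied together by the commutative product. The first step is to write down, in the free operad on one binary generator $\cdot$, the most general possible deformations of the three defining relations of $\Poisson$ — associativity of $\cdot$, the Leibniz (biderivation) rule, and the Jacobi identity for $\{-,-\}$ — where the right-hand sides are corrections lying in strictly higher filtration degree. Because $\gr_F\calO$ is exactly $\Poisson$ with $\{-,-\}$ spanning the filtration-degree-one piece, every such correction is forced to be a linear combination of arity-three Poisson monomials of filtration degree $\ge 2$, i.e. a scalar multiple of the iterated bracket $\{a_2,\{a_1,a_3\}\}$ (and its $S_3$-images), with all lower-degree terms vanishing. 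This pins the presentation of $\calO$ down to finitely many scalar parameters.

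The second step is to impose consistency of this presentation, i.e. to run the analogue of the Livernet--Loday computation: the relations must be compatible under the operadic composition (equivalently, the rewriting system they define must be confluent, or one checks that the would-be quadratic-plus-higher relations close up in arity four). Here the key point — and the place where the almost-Poisson case genuinely diverges from the Poisson case — is that we are \emph{not} assuming the Jacobi identity holds exactly in $\calO$; but the hypothesis says it holds in $\gr_F\calO$, so in $\calO$ we may only have a Jacobiator lying in filtration degree $\ge 3$ (arity three), hence by the degree count it must actually vanish, since there is no arity-three Poisson monomial of filtration degree $3$. The same reasoning forces the biderivation relation to hold on the nose. So the only genuine freedom is the one scalar $t_0$ multiplying $\{a_2,\{a_1,a_3\}\}$ in the associativity relation, and we conclude $\calO$ has the presentation \eqref{eq:relationsLL} with $t=t_0$, i.e. $\calO\cong\calO_t|_{t=t_0}$.

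The third step is the final assertion over a quadratically closed field: if $t_0=0$ we get the Poisson operad, and if $t_0\ne 0$ the last bullet of Proposition~\ref{prop:LL} identifies $\calO_t|_{t=t_0}$ with the associative operad. Here one should note a rescaling subtlety — replacing the generator $\{a_1,a_2\}$ by $\lambda\{a_1,a_2\}$ sends $t_0$ to $\lambda^{-2}t_0$, which is why a square root of $t_0$ (hence quadratic closedness) is what is needed — but since $\calO$ is a priori generated by $\cdot$ alone, with $\{-,-\}$ defined as the commutator, we are not free to rescale and must invoke Proposition~\ref{prop:LL} as stated.

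I expect the main obstacle to be the bookkeeping in the first step: one must argue carefully that the filtration by powers of the commutator ideal in $\calO$ restricts to the ``number of brackets'' filtration on $\gr_F\calO=\Poisson$ in the expected way, so that the dimension count $\dim\calO(n)=n!$ is available and so that ``a correction term of filtration degree $\ge 2$ in arity $3$'' really is a multiple of $\{a_2,\{a_1,a_3\}\}$ and nothing else. Once that identification of the associated graded with its Poisson basis is in hand, the degree-counting arguments that kill the Jacobiator and the Leibniz defect, and that isolate the single parameter $t_0$, are essentially forced.
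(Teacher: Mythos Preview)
Your overall strategy---polarize, write the Poisson relations with higher-filtration corrections, then constrain the parameters---matches the paper's. The Jacobi step (the Jacobiator lies in $F^2$, any correction would have to lie in $F^3\cap\calO(3)=0$, so it vanishes on the nose) is correct and is exactly what the paper does. But two of your subsequent degree-counting claims are wrong and leave genuine gaps.

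First, the associator lies in filtration degree $0$, so its correction a priori lives in degree $\ge 1$, not $\ge 2$ as you assert. In degree $1$ (and skew in $a_1,a_3$) one has the term $s(\{a_1,a_2\}a_3-\{a_3,a_2\}a_1)$, and this parameter $s$ must be eliminated by a separate argument. The paper applies the cyclic symmetrizer $1+\sigma+\sigma^2$: it annihilates the associator and the double-bracket terms (the latter via Jacobi), but sends the $s$-term to $2s(\{a_1,a_2\}a_3+\{a_2,a_3\}a_1+\{a_3,a_1\}a_2)$, which is nonzero in $\Poisson$; so $s\ne 0$ would force an extra relation in $\gr_F\calO$, giving $s=0$. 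Moreover, even at degree~$2$ the associator correction has \emph{two} independent skew-in-$(a_1,a_3)$ parameters $t,u$ (not one), which collapse to the single parameter $t+u$ only after invoking the Jacobi identity---another step you do not mention.

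Second, and more seriously, the Leibniz defect $\{a_1a_2,a_3\}-\{a_1,a_3\}a_2-\{a_2,a_3\}a_1$ lies in filtration degree $1$, so its correction sits in degree $\ge 2$; but degree $2$ in arity $3$ is \emph{not} empty, and its symmetric-in-$(a_1,a_2)$ part contains $v(\{\{a_1,a_3\},a_2\}+\{\{a_2,a_3\},a_1\})$. Your sentence ``the same reasoning forces the biderivation relation to hold on the nose'' is therefore false: the Jacobi-type argument does not carry over, and the parameter $v$ survives the arity-$3$ analysis. The paper does not kill $v$ by elementary means at all; having reduced to the two parameters $v$ and $t+u$, it invokes the classification result of Bremner--Dotsenko \cite[Th.~4.5]{MR4177579} to conclude that the operad lies in the Livernet--Loday family. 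Without either that citation or the explicit arity-$4$ consistency computation you allude to but do not perform, there is no argument that $v=0$.
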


\begin{proof}
Like in \cite{MR2225770}, we use the polarization process: we shall view our operad as an operad generated by the two operations 
 \[
a_1a_2=a_1\cdot a_2+a_2\cdot a_1, \qquad \{a_1,a_2\}=a_1\cdot a_2-a_2\cdot a_1.
 \]
It is a straightforward exercise to write down all feasible candidates for the defining relations of the operad~$\calO$ in terms of these generators. 
First, the expression 
 \[
\{\{a_1,a_2\},a_3\}+\{\{a_2,a_3\},a_1\}+\{\{a_3,a_1\},a_2\}
 \] 
(the ``Jacobiator'') must vanish in the associated graded operad, so it actually must vanish on the nose, since it already belongs to the deepest level of filtration that is available in the space of all ternary operations.
Next, the expression 
 \[
\{a_1a_2,a_3\}-\{a_1,a_3\}a_2-\{a_2,a_3\}a_1
 \] 
must vanish in the associated graded operad and is symmetric in $a_1,a_2$, so we must have 
\begin{equation}
\label{eq:relationsPQ-Leibniz}
\{a_1a_2,a_3\}= \{a_1,a_3\}a_2+\{a_2,a_3\}a_1 + v (\{\{a_1,a_3\},a_2\}+\{\{a_2,a_3\},a_1\})
\end{equation}
for some $v\in\k$. 
Finally, the associator 
 \[
(a_1a_2)a_3 - a_1(a_2a_3)
 \]
 must vanish in the associated graded operad and is skew-symmetric in $a_1,a_3$, so we must have 
\begin{multline}
\label{eq:relationsPQ-assoc}
(a_1a_2)a_3 - a_1(a_2a_3)= \\
s (\{a_1,a_2\}a_3-\{a_3,a_2\}a_1) 
+t (\{\{a_1,a_2\},a_3\}-\{\{a_3,a_2\},a_1\}) 
+u \{\{a_1,a_3\},a_2\}=\\
 s (\{a_1,a_2\}a_3-\{a_3,a_2\}a_1) 
+(t+u) \{\{a_1,a_3\},a_2\}
\end{multline}
for some $s,t,u\in\k$ (the second equality uses the Jacobi identity which we already established to hold). Moreover, if we denote by $\sigma$ the cycle $(1 2 3)\in S_3$, the element $1+\sigma+\sigma^2\in\k S_3$ annihilates $(a_1a_2)a_3 - a_1(a_2a_3)$ and $\{\{a_1,a_3\},a_2\}$, so we must have $s=0$ (if not, there will be an additional relation in the associated graded operad). 
Now from the result of \cite[Th.~4.5]{MR4177579} it follows easily that our operad is one of the operads of the Livernet--Loday family. The last assertion is obvious from the results quoted before.
\end{proof}

In plain words, this result means that, if the ground field is sufficiently large, the operad theory can be used to rigorously prove, without any extrinsic assumptions, that there are only two algebraic contexts in which it makes sense to deform Poisson algebras in the direction of the bracket: within the category of Poisson algebras itself and the deformation into associative algebras. (And if some square roots do not exist in $\k$, one can exhibit other meaningful examples of algebras into which Poisson algebras may be quantized). 

\section{The case of almost Poisson algebras}

We shall now apply the same line of thought to seach for algebraic identities that may hold for deformation quantizations of almost Poisson algebras. Let us start with a small remark that should serve as a useful warning. Let us take the operad $\calO$ controlling alternative algebras, and examine the associated graded operad for the filtration by powers of the ideal generated by the operation $\{a_1,a_2\}=a_1\cdot a_2-a_2\cdot a_1$. Then, in terms of the polarized generators $a_1a_2=a_1\cdot a_2+a_2\cdot a_1$ and $\{a_1,a_2\}=a_1\cdot a_2-a_2\cdot a_1$, the alternative identities imply the identities $(a_1a_2)a_3-a_1(a_2a_3)=-1/3 J(a_1,a_2,a_3)-\{\{a_1,a_3\},a_2\}$, $\{a_1a_2,a_3\}=a_1\{a_2,a_3\}+a_2\{a_1,a_3\}$, so there is a map from the almost Poisson operad to the associated graded operad. However, it is easy to see that this map is not an isomorphism; for example, the operation $\{a_1,a_2\}$ satisfies the Malcev identity. 

We shall now establish the following result, which exhibits a major qualitative difference from that of Proposition \ref{prop:defPoisson}. 

\begin{theorem}\label{th:main}
Suppose that $\calO$ is an operad generated by one binary operation $\cdot$, and such that the associated graded operad for the filtration by powers of the ideal generated by the operation $\{a_1,a_2\}=a_1\cdot a_2-a_2\cdot a_1$ is isomorphic to the almost Poisson operad. Then $\calO$ is isomorphic to the almost Poisson operad.  
\end{theorem}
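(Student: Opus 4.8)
The plan is to push the argument of Proposition~\ref{prop:defPoisson} as far as it will go, and then, in place of the appeal to \cite{MR4177579}, to run a coherence computation in arity~$4$ in which the failure of the Jacobi identity becomes an obstruction. As in that proof, I would polarize, writing $\calO$ in terms of $a_1a_2=a_1\cdot a_2+a_2\cdot a_1$ and $\{a_1,a_2\}=a_1\cdot a_2-a_2\cdot a_1$. Since $\gr_F\calO\cong\APoisson$ one has $\dim\calO(n)=\dim\APoisson(n)$ for all $n$ and an isomorphism $\calO(n)\cong\APoisson(n)$ of $S_n$-modules; in particular $\dim\calO(3)=7$. The same bookkeeping as in the proof of Proposition~\ref{prop:defPoisson} — the Leibniz-type relation is symmetric in $a_1,a_2$ and, the Leibniz identity already holding in $\APoisson$, its correction lies in the deepest filtration layer available in arity three; the associator is skew-symmetric in $a_1,a_3$ — shows that, up to rescaling the generators, the defining relations of $\calO$ are
\begin{align*}
&\{a_1a_2,a_3\}=\{a_1,a_3\}a_2+\{a_2,a_3\}a_1+v\bigl(\{\{a_1,a_3\},a_2\}+\{\{a_2,a_3\},a_1\}\bigr),\\
&(a_1a_2)a_3-a_1(a_2a_3)=s\bigl(\{a_1,a_2\}a_3-\{a_3,a_2\}a_1\bigr)+u\{\{a_1,a_3\},a_2\}+w\bigl(\{\{a_1,a_2\},a_3\}-\{\{a_3,a_2\},a_1\}\bigr)
\end{align*}
for some $s,u,v,w\in\k$; the one structural difference from the Poisson case is that, Jacobi being unavailable, there are two independent skew-symmetric double brackets, so the associator correction carries two parameters $u$ and $w$ rather than one.

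Next, applying the cyclic symmetriser $1+\sigma+\sigma^2\in\k S_3$ with $\sigma=(1\,2\,3)$ to the associator relation annihilates its left-hand side, turns the $s$-term into $2\bigl(\{a_1,a_2\}a_3+\{a_2,a_3\}a_1+\{a_3,a_1\}a_2\bigr)$, and turns the $u$- and $w$-terms into multiples of the Jacobiator $J=\{\{a_1,a_2\},a_3\}+\{\{a_2,a_3\},a_1\}+\{\{a_3,a_1\},a_2\}$. Since $\{a_1,a_2\}a_3+\{a_2,a_3\}a_1+\{a_3,a_1\}a_2$ is a nonzero element of the first filtration layer of $\APoisson$, hence of $\gr^1\calO(3)$, we get $s=0$; and since $J$ is a \emph{nonzero} element of $\gr^2\calO(3)$ — this is the point at which almost Poisson algebras part ways with Poisson ones, for which $J=0$ — a linear relation between $u$ and $w$ follows, leaving a single parameter for the associator correction alongside $v$. (In the Poisson case that surviving parameter is precisely the Livernet--Loday deformation parameter; here it will be killed in arity~$4$.)

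What remains — and where I expect the only real difficulty — is to show that $v$ and the associator correction vanish. For this one needs a good grip on $\APoisson(4)$: it is the composite of the commutative operad $\Com$ with the free anticommutative (``almost Lie'') operad along the biderivation distributive law, so that $\gr^2\APoisson(4)$ has a transparent basis including the three linearly independent ``product of two brackets'' elements $\{a_1,a_2\}\{a_3,a_4\}$, $\{a_1,a_3\}\{a_2,a_4\}$, $\{a_1,a_4\}\{a_2,a_3\}$. Reassociating $(((a_1a_2)a_3)a_4)$ along the two paths of the Mac Lane pentagon and reducing by the relations above produces, in $\calO(4)$, an identity whose $F^2/F^3$-component is a nonzero multiple of a product of two brackets unless the associator correction vanishes — the pentagon coherence, valid for the Livernet--Loday family in the Poisson case, here fails precisely because checking it would call for an instance of the Jacobi identity. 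A parallel computation — expanding $\{a_1a_2,a_3a_4\}$ by the biderivation relation in the first slot on the one hand, and (after anticommutativity) in the second slot on the other — forces $v=0$; since only $F^2/F^3$-components are compared, the two computations do not interfere. With $s=u=v=w=0$ the defining relations of $\calO$ are commutativity, anticommutativity, associativity and the biderivation identity, i.e.\ precisely the presentation of $\APoisson$, so there is a surjection $\APoisson\twoheadrightarrow\calO$; it is an isomorphism because $\dim\calO(n)=\dim\gr_F\calO(n)=\dim\APoisson(n)$ for every $n$.
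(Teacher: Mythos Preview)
Your arity-$3$ bookkeeping is correct and agrees with the paper: polarize, write down the two relations with undetermined lower-order corrections, and use the cyclic symmetriser to force $s=0$ and the linear relation $u=2w$, leaving one associator parameter $t$ (your $w$) and one Leibniz parameter $v$. Your computation of $v=0$ via the two expansions of $\{a_1a_2,a_3a_4\}$ is also the paper's; modulo $F^3$ the two expansions differ exactly in the sign of $\{a_1,a_3\}\{a_2,a_4\}+\{a_1,a_4\}\{a_2,a_3\}$, and these are independent in $\gr^2\APoisson(4)$.

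The genuine gap is your proposed pentagon argument for killing $t$. Write $A(a_1,a_2,a_3)=(a_1a_2)a_3-a_1(a_2a_3)=tB(a_1,a_2,a_3)$ with $B=\{\{a_1,a_2\},a_3\}-\{\{a_3,a_2\},a_1\}+2\{\{a_1,a_3\},a_2\}$. The pentagon identity for the commutative product reads, tautologically in $\calO$,
\[
t\bigl(B(a_1a_2,a_3,a_4)+B(a_1,a_2,a_3a_4)-B(a_1,a_2,a_3)a_4-B(a_1,a_2a_3,a_4)-a_1B(a_2,a_3,a_4)\bigr)=0,
\]
and your claim is that the bracketed expression, reduced in $\gr^2\APoisson(4)$ via the Leibniz rule, is nonzero (in fact a nonzero combination of $\{a_i,a_j\}\{a_k,a_l\}$'s). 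It is not. A direct expansion shows that the product-of-two-brackets contributions cancel in each of $B(a_1a_2,a_3,a_4)$, $B(a_1,a_2,a_3a_4)$, $B(a_1,a_2a_3,a_4)$ separately (for instance, in $B(a_1,a_2a_3,a_4)$ the four such terms come in two opposite-sign pairs), and the remaining ``double bracket times variable'' terms also match up term by term; the bracketed expression is identically zero in $\gr^2\APoisson(4)$. In other words, $B$ is a Hochschild $2$-cocycle for the commutative product of $\APoisson$, so the Mac~Lane pentagon imposes \emph{no} constraint on $t$ at the $F^2/F^3$ level. Your heuristic that ``the pentagon coherence would call for Jacobi'' is misleading here: the specific combination forced by $u=2w$ is precisely the one for which the first-order pentagon obstruction vanishes without Jacobi.

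The paper eliminates $t$ by a different arity-$4$ check that lives one level deeper. After setting $v=0$, compute $\{(a_1a_2)a_3-a_1(a_2a_3),a_4\}$ in two ways: first substitute $A=tB$ and bracket with $a_4$, obtaining only left-nested triple brackets $\{\{\{-,-\},-\},a_4\}$; second, apply the (now honest) Leibniz rule to peel off $a_4$ and then use $A=tB$ on the resulting associators, which produces terms such as $t\,\{\{a_1,a_2\},\{a_3,a_4\}\}$ that cannot appear on the first side. Comparing the coefficient of $\{\{a_1,a_2\},\{a_3,a_4\}\}$ in $\gr^3\APoisson(4)$ forces $t=0$. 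So the obstruction that kills $t$ is an $F^3$-phenomenon detected by the ``balanced'' double bracket $\{\{a_1,a_2\},\{a_3,a_4\}\}$ in the free anticommutative algebra, not an $F^2$-phenomenon detected by products of single brackets as you suggest.
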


\begin{proof}
Once again, it is straightforward to write down all feasible candidates for the defining relations of the operad $\calO$ in terms of the polarized generators $a_1a_2=a_1\cdot a_2+a_2\cdot a_1$ and $\{a_1,a_2\}=a_1\cdot a_2-a_2\cdot a_1$. 
As above, the expression 
 \[
\{a_1a_2,a_3\}-\{a_1,a_3\}a_2-\{a_2,a_3\}a_1
 \] 
must vanish in the associated graded operad and is symmetric in $a_1,a_2$, so we must have 
\begin{equation}
\label{eq:relationsGPQ-Leibniz}
\{a_1a_2,a_3\}= \{a_1,a_3\}a_2+\{a_2,a_3\}a_1 + v (\{\{a_1,a_3\},a_2\}+\{\{a_2,a_3\},a_1\})
\end{equation}
for some $v\in\k$. 
Similarly, the associator 
 \[
(a_1a_2)a_3 - a_1(a_2a_3)
 \]
 must vanish in the associated graded operad and is skew-symmetric in $a_1,a_3$, so we must have 
\begin{multline}
\label{eq:relationsGPQ-assoc}
(a_1a_2)a_3 - a_1(a_2a_3)= \\
s (\{a_1,a_2\}a_3-\{a_3,a_2\}a_1) 
+t (\{\{a_1,a_2\},a_3\}-\{\{a_3,a_2\},a_1\}) 
+u \{\{a_1,a_3\},a_2\} 
\end{multline}
for some $s,t,u\in\k$. Moreover, if we denote by $\sigma$ the cycle $(1 2 3)\in S_3$, the element $1+\sigma+\sigma^2\in\k S_3$ annihilates $(a_1a_2)a_3 - a_1(a_2a_3)$, so we must have $s=0$ and $u=2t$ (if not, there will be an additional relation in the associated graded operad); note that the Jacobi identity is no longer applicable, so the upshot of our analysis is different.

To summarize, the relations of the operad $\calO$ are
\begin{gather}
(a_1a_2)a_3 - a_1(a_2a_3)= 
t (\{\{a_1,a_2\},a_3\}-\{\{a_3,a_2\},a_1\} +2 \{\{a_1,a_3\},a_2\}),\label{eq:def1}\\ 
\{a_1a_2,a_3\}= \{a_1,a_3\}a_2+\{a_2,a_3\}a_1 + v (\{\{a_1,a_3\},a_2\}+\{\{a_2,a_3\},a_1\})\label{eq:def2}
\end{gather}
for some $t,v\in\k$.

It is well known \cite{MR1747916,MR3298985,MR1755366} that the free almost Poisson algebra is the symmetric algebra on the free anticommutative nonassociative algebra (an ``almost Lie algebra'' in the terminology of \cite{MR1747916}). Operadically, this means that the identities of almost Poisson algebras define a distributive law \cite{MR2954392} between the operads of commutative associative algebras and anticommutative nonassociative algebras. Of course, this imples some constraints before taking the associated graded operad. Formally, the relations of the operad $\calO$ must define a filtered (also called inhomogeneous) distributive law \cite{MR4177579,MR3302959} between those operads. Let us explain what it means concretely. 

As the first step, let us compute the element $\{a_1a_2,a_3a_4\}$ in two different ways using Relations \eqref{eq:def1} and \eqref{eq:def2}. We begin with saying that Relation\eqref{eq:def2} implies   
\begin{multline*}
\{a_1a_2,a_3a_4\}=\\\{a_1,a_3a_4\}a_2+\{a_2,a_3a_4\}a_1 + v (\{\{a_1,a_3a_4\},a_2\}+\{\{a_2,a_3a_4\},a_1\})
\end{multline*}
 and 
\begin{multline*}
\{a_1a_2,a_3a_4\}=\\\{a_1a_2,a_3\}a_4+\{a_1a_2,a_4\}a_3 + v (\{\{a_1a_2,a_3\},a_4\}+\{\{a_1a_2,a_4\},a_3\}).
\end{multline*}
Using Relation \eqref{eq:def2} further, we obtain
\begin{multline*}
\{a_1,a_3a_4\}a_2+\{a_2,a_3a_4\}a_1 + v (\{\{a_1,a_3a_4\},a_2\}+\{\{a_2,a_3a_4\},a_1\})=\\
(\{a_1,a_3\}a_4+\{a_1,a_4\}a_3 + v (\{\{a_1,a_3\},a_4\}+\{\{a_1,a_4\},a_3\}))a_2\\
+(\{a_2,a_3\}a_4+\{a_2,a_4\}a_3 + v (\{\{a_2,a_3\},a_4\}+\{\{a_2,a_4\},a_3\}))a_1\\
+ v (\{\{a_1,a_3a_4\},a_2\}+\{\{a_2,a_3a_4\},a_1\})
\end{multline*}
and
\begin{multline*}
\{a_1a_2,a_3\}a_4+\{a_1a_2,a_4\}a_3 + v (\{\{a_1a_2,a_3\},a_4\}+\{\{a_1a_2,a_4\},a_3\})=\\
(\{a_1,a_3\}a_2+\{a_2,a_3\}a_1 + v (\{\{a_1,a_3\},a_2\}+\{\{a_2,a_3\},a_1\}))a_4\\
+(\{a_1,a_4\}a_2+\{a_2,a_4\}a_1 + v (\{\{a_1,a_4\},a_2\}+\{\{a_2,a_4\},a_1\}))a_3\\
+v (\{\{a_1a_2,a_3\},a_4\}+\{\{a_1a_2,a_4\},a_3\}).
\end{multline*}

Let us now do the calculations modulo the third power of the ideal generated by the bracket; we shall indicate such calculations by the sign $\equiv$ as opposed to $=$. Using Relations \eqref{eq:def1} and \eqref{eq:def2}, we obtain  
\begin{multline*}
(\{a_1,a_3\}a_4+\{a_1,a_4\}a_3 + v (\{\{a_1,a_3\},a_4\}+\{\{a_1,a_4\},a_3\}))a_2\\
+(\{a_2,a_3\}a_4+\{a_2,a_4\}a_3 + v (\{\{a_2,a_3\},a_4\}+\{\{a_2,a_4\},a_3\}))a_1\\
+ v (\{\{a_1,a_3a_4\},a_2\}+\{\{a_2,a_3a_4\},a_1\})\\
\equiv \{a_1,a_3\}(a_2a_4)+\{a_1,a_4\}(a_2a_3)+ v (\{\{a_1,a_3\},a_4\}+\{\{a_1,a_4\},a_3\})a_2\\
+\{a_2,a_3\}(a_1a_4)+\{a_2,a_4\}(a_1a_3) + v (\{\{a_2,a_3\},a_4\}+\{\{a_2,a_4\},a_3\})a_1\\
+ v (\{\{a_1,a_3\}a_4+\{a_1,a_4\}a_3,a_2\}+\{\{a_2,a_3\}a_4+\{a_2,a_4\}a_3,a_1\})\\
\equiv \{a_1,a_3\}(a_2a_4)+\{a_1,a_4\}(a_2a_3)+\{a_2,a_3\}(a_1a_4)+\{a_2,a_4\}(a_1a_3)\\
+v((\{\{a_1,a_3\},a_4\}+\{\{a_1,a_4\},a_3\})a_2+(\{\{a_2,a_3\},a_4\}+\{\{a_2,a_4\},a_3\})a_1)\\
+v((\{\{a_1,a_3\},a_2\}+\{\{a_2,a_3\},a_1\})a_4+(\{\{a_1,a_4\},a_2\}+\{a_2,a_4\},a_1\})a_3)\\
+v (\{a_1,a_3\}\{a_4,a_2\}+\{a_1,a_4\}\{a_3,a_2\}+\{a_2,a_3\}\{a_4,a_1\}+\{a_2,a_4\}\{a_3,a_1\}).
\end{multline*}
Similarly, using the same Relations \eqref{eq:def1} and \eqref{eq:def2}, we obtain 
\begin{multline*}
(\{a_1,a_3\}a_2+\{a_2,a_3\}a_1 + v (\{\{a_1,a_3\},a_2\}+\{\{a_2,a_3\},a_1\}))a_4\\
+(\{a_1,a_4\}a_2+\{a_2,a_4\}a_1 + v (\{\{a_1,a_4\},a_2\}+\{\{a_2,a_4\},a_1\}))a_3\\
+v (\{\{a_1a_2,a_3\},a_4\}+\{\{a_1a_2,a_4\},a_3\})\\
\equiv \{a_1,a_3\}(a_2a_4)+\{a_2,a_3\}(a_1a_4) + v (\{\{a_1,a_3\},a_2\}+\{\{a_2,a_3\},a_1\}))a_4\\
+\{a_1,a_4\}(a_2a_3)+\{a_2,a_4\}(a_1a_3) + v (\{\{a_1,a_4\},a_2\}+\{\{a_2,a_4\},a_1\}))a_3\\
+v (\{\{a_1,a_3\}a_2 + a_1\{a_2,a_3\} ,a_4\} + \{\{a_1,a_4\}a_2 + a_1\{a_2,a_4\} ,a_3\})\\
\equiv \{a_1,a_3\}(a_2a_4)+\{a_1,a_4\}(a_2a_3)+\{a_2,a_3\}(a_1a_4)+\{a_2,a_4\}(a_1a_3)\\
+v ((\{\{a_1,a_3\},a_2\}+\{\{a_2,a_3\},a_1\})a_4 + (\{\{a_1,a_4\},a_2\}+\{\{a_2,a_4\},a_1\})a_3)\\
+v ((\{\{a_1,a_3\},a_4\}+\{\{a_1,a_4\},a_3\})a_2 + (\{\{a_2,a_3\},a_4\}+\{\{a_2,a_4\},a_3\})a_1)\\
+v(\{a_1,a_3\}\{a_2,a_4\}+\{a_1,a_4\}\{a_2,a_3\}+\{a_1,a_4\}\{a_2,a_3\}+\{a_1,a_3\}\{a_2,a_4\}).
\end{multline*}
Comparing the two results, we see that the only difference is in the opposite signs of $\{a_1,a_3\}\{a_2,a_4\}+\{a_1,a_4\}\{a_2,a_3\}$. However, the elements $\{a_1,a_3\}\{a_2,a_4\}$ and $\{a_1,a_4\}\{a_2,a_3\}$ are linearly independent in the associated graded operad, that is modulo the third power of the ideal generated by the bracket. 
Hence we must have $v=0$. 

As the second step, let us compute the element 
 \[
\{(a_1a_2)a_3-a_1(a_2a_3),a_4\}
 \] 
in two different ways using the relations \eqref{eq:def1} and \eqref{eq:def2}, where we already put $v=0$. On the one hand, we have
\begin{multline*} 
\{(a_1a_2)a_3-a_1(a_2a_3),a_4\}=\\
t\left(\{\{\{a_1,a_2\},a_3\}-\{\{a_3,a_2\},a_1\} +2 \{\{a_1,a_3\},a_2\},a_4\}\right) .
\end{multline*}
On the other hand, we have 
\begin{multline*}
\{(a_1a_2)a_3-a_1(a_2a_3),a_4\}=\\
(a_1a_2)\{a_3,a_4\}+\{a_1a_2,a_4\}a_3-(a_2a_3)\{a_1,a_4\}-\{a_2a_3,a_4\}a_1\\
=(a_1a_2)\{a_3,a_4\}+(\{a_1,a_4\}a_2+a_1\{a_2,a_4\})a_3\\-(a_2a_3)\{a_1,a_4\}-(a_2\{a_3,a_4\}+\{a_2,a_4\}a_3)a_1\\
=t (\{\{a_1,a_2\},\{a_3,a_4\}\}-\{\{a_3,a_4\},a_2\},a_1\} +2 \{\{a_1,\{a_3,a_4\}\},a_2\})\\
+
t (\{\{\{a_1,a_4\},a_2\},a_3\}-\{\{a_3,a_2\},\{a_1,a_4\}\} +2 \{\{\{a_1,a_4\},a_3\},a_2\})\\+
t (\{\{a_1,\{a_2,a_4\}\},a_3\}-\{\{a_3,\{a_2,a_4\}\},a_1\} +2 \{\{a_1,a_3\},\{a_2,a_4\}\}),
\end{multline*}
and examining (for example) the coefficient of $\{\{a_1,a_2\},\{a_3,a_4\}\}$, we find $t=0$. The statement of the theorem follows.
\end{proof}

\section{Almost Poisson algebras and Kokoris algebras}

Let us conclude our analysis by discussing the identities of almost Poisson algebras in terms of the ``depolarized'' operation
 \[
a_1\cdot a_2=a_1a_2+\{a_1,a_2\}.
 \]
Recall that a Kokoris algebra \cite{MR116039,shestakov2022nonmatrix}
is an algebra with one binary operation $a_1,a_2\mapsto a_1\cdot a_2$ satisfying the identity
\begin{gather*}
J(a_1,a_2,a_3)=4(a_1,a_2,a_3)-[[a_1,a_3],a_2],
\end{gather*}
where $[a_1,a_2]=a_1\cdot a_2-a_2\cdot a_1$, $J(a_1,a_2,a_3)$ is the Jacobiator of the bracket $[a_1,a_2]$, and $(a_1,a_2,a_3)$ is the associator of the product $a_1\cdot a_2$. 

\begin{proposition}
Depolarization $a_1\cdot a_2=a_1a_2+\{a_1,a_2\}$ induces an isomorphism between the operad of almost Poisson algebras and the operad of Kokoris algebras.
\end{proposition}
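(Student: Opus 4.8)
The plan is to exploit that, over a field of characteristic zero, polarization and depolarization are mutually inverse linear changes of generators. Concretely, depolarization identifies the free operad on one (not necessarily symmetric) binary operation $\mu = a_1\cdot a_2$ with the free operad on a symmetric generator $a_1a_2$ and an antisymmetric generator $\{a_1,a_2\}$, via $a_1\cdot a_2\mapsto a_1a_2+\{a_1,a_2\}$ and, inversely, $a_1a_2\mapsto\tfrac12(a_1\cdot a_2+a_2\cdot a_1)$, $\{a_1,a_2\}\mapsto\tfrac12(a_1\cdot a_2-a_2\cdot a_1)=\tfrac12[a_1,a_2]$. Under this identification both the operad of Kokoris algebras and the operad of almost Poisson algebras become quotients of one and the same free operad by operadic ideals, and since all the defining relations involved are quadratic (homogeneous of weight $2$ in the generating operation, hence concentrated in arity $3$), it suffices to show that the Kokoris relation and the pair of almost Poisson relations span the same $S_3$-submodule of the $12$-dimensional arity-$3$ component of that free operad.

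I would first check one inclusion: the Kokoris identity holds in every almost Poisson algebra. With $a_1\cdot a_2=a_1a_2+\{a_1,a_2\}$ one has $[a_1,a_2]=2\{a_1,a_2\}$, and expanding the associator using associativity of $a_1a_2$ and the biderivation property gives $(a_1,a_2,a_3)=\{\{a_1,a_2\},a_3\}-\{a_1,\{a_2,a_3\}\}$; consequently $J(a_1,a_2,a_3)=4\bigl(\{\{a_1,a_2\},a_3\}+\{\{a_2,a_3\},a_1\}+\{\{a_3,a_1\},a_2\}\bigr)$ and $[[a_1,a_3],a_2]=4\{\{a_1,a_3\},a_2\}$. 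Substituting these into $J(a_1,a_2,a_3)-4(a_1,a_2,a_3)+[[a_1,a_3],a_2]$ cancels the Jacobiator entirely, and what remains vanishes by the antisymmetry of $\{-,-\}$ alone, with no appeal to the Jacobi identity. Thus depolarization is a well-defined operad morphism from Kokoris algebras to almost Poisson algebras, and it is surjective because $a_1a_2$ and $\{a_1,a_2\}$ are, up to the factor $\tfrac12$, the symmetrization and antisymmetrization of $a_1\cdot a_2$.

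The heart of the matter, and the step I expect to be the main obstacle, is the reverse inclusion: associativity of $a_1a_2$ and the biderivation identity must both be consequences of the single Kokoris relation. Writing the polarized Kokoris relation out in the free operad on the two generators, the decisive structural fact is that all monomials in which $\{-,-\}$ is applied to the output of $\{-,-\}$ cancel; up to a nonzero scalar the relation takes the shape $R=\mathrm{A}+X$, where $\mathrm{A}=(a_1a_2)a_3-a_1(a_2a_3)$ is supported on the $a_1a_2$-only monomials and $X$, supported on the mixed monomials, lies in the $S_3$-span of the biderivation relation $\{a_1a_2,a_3\}-\{a_1,a_3\}a_2-\{a_2,a_3\}a_1$ (concretely, $X$ is a sum of two cyclic permutations of it). Acting by the transposition $(1\,3)$ negates $\mathrm{A}$ and fixes $X$, so $\tfrac12\bigl(R-(1\,3)\cdot R\bigr)=\mathrm{A}$ and $\tfrac12\bigl(R+(1\,3)\cdot R\bigr)=X$ both lie in the operadic ideal generated by $R$; a short computation with the $S_3$-orbit then shows that $\mathrm{A}$ generates the whole $2$-dimensional space spanned by the associator and its permutations, and that $X$ generates the whole biderivation relation (already the projections of $X$, $(1\,2\,3)\cdot X$, $(1\,3\,2)\cdot X$ to the $3$-dimensional space of monomials of the form $\{a_1a_2,a_3\}$ are linearly independent). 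Equivalently one can close the argument by a dimension count: by the distributive-law description recalled above, $\APoisson=\Com\circ\calB$ with $\calB$ the operad of anticommutative nonassociative algebras and $\dim\calB(3)=3$, so $\dim\APoisson(3)=7$ and the almost Poisson relations span a $5$-dimensional subspace in arity $3$; combined with the first inclusion it then suffices to verify that the $S_3$-span of the Kokoris relation has dimension at least $5$. Either way the two operadic ideals coincide, so depolarization is an isomorphism. What makes this reverse implication go through — and what could a priori fail — is precisely the cancellation of the double-bracket monomials, which is also the conceptual reason why the quantization target of almost Poisson algebras acquires no Jacobi-type corrections.
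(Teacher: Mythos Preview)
Your proof is correct and is precisely the ``direct computation'' the paper alludes to, carried out in full: you identify the two operads as quotients of the same free operad via polarization, verify one inclusion by expanding the Kokoris identity in an almost Poisson algebra, and obtain the reverse inclusion by observing that the double-bracket terms of the polarized Kokoris relation cancel in the free operad, so that the $(1\,3)$-symmetrization separates it into the associator and an element whose $S_3$-orbit spans the biderivation relations. The paper gives no details beyond citing the literature, so your argument is more informative while following the same route.
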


\begin{proof}
Direct computation, which essentially is already between the lines of \cite{MR116039,shestakov2022nonmatrix}.
\end{proof}

Our main result, Theorem \ref{th:main}, essentially implies that every almost Poisson algebra admits a trivial quantization 
 \[
a_1\star a_2=a_1a_2+\frac{\hbar}2\{a_1,a_2\}.
 \]
Replacing the operation $\{a_1,a_2\}$ by the operation $\frac{\hbar}2\{a_1,a_2\}$ does not affect the identities between the operations, and so it follows from the above proposition that the star product $a_1\star a_2$ defines a Kokoris algebra. The axioms of Kokoris algebras clearly imply the flexible identity $(a,b,a)=0$, which had been previously noted in \cite{MR3846952}. However, the Kokoris algebra axioms are clearly not enough to ensure alternativity of the star product, which gives a conceptual explanations of the result of \cite{MR3657740} stating that a particular class of nonassociative star products, the ``monopole star products'' are not alternative.

\begin{remark}
The defining identity of Kokoris algebras can be rewritten in a more memorable way using the operation $\{a_1,a_2\}'=\frac12(a_1a_2-a_2a_1)$:
 \[
(a_1,a_2,a_3)=(a_1,a_2,a_3)_{\{-,-\}'},
 \]
that is, the associators of the product and of the bracket $\{a_1,a_2\}'$ are equal.
\end{remark}

\section*{Funding} The author was supported by Institut Universitaire de France and by FAPESP (grant 2022/10933-3).

\section*{Acknowledgements} 
I am grateful to Ivan Pavlovich Shestakov for discussions of Kokoris algebras and general encouragement, and to Dmitry Vassilevich for some inspiring conversations.
The first draft of this note was completed during the author's visit of Tashkent by invitation of Farkhod Eshmatov, and the author is grateful to him for warm hospitality.

\bibliographystyle{plain}
\bibliography{biblio}

\begin{thebibliography}{10}

\bibitem{Bakas2014}
Ioannis Bakas and Dieter L\"ust.
\newblock 3-cocycles, non-associative star-products and the magnetic paradigm
  of r-flux string vacua.
\newblock {\em Journal of High Energy Physics}, 2014(1):171, 2014.

\bibitem{MR496157}
F.~Bayen, M.~Flato, C.~Fronsdal, A.~Lichnerowicz, and D.~Sternheimer.
\newblock Deformation theory and quantization. {I}. {D}eformations of
  symplectic structures.
\newblock {\em Ann. Physics}, 111(1):61--110, 1978.

\bibitem{MR496158}
F.~Bayen, M.~Flato, C.~Fronsdal, A.~Lichnerowicz, and D.~Sternheimer.
\newblock Deformation theory and quantization. {II}. {P}hysical applications.
\newblock {\em Ann. Physics}, 111(1):111--151, 1978.

\bibitem{MR3657740}
Martin Bojowald, Suddhasattwa Brahma, Umut B\"{u}y\"{u}k\c{c}am, and Thomas
  Strobl.
\newblock Monopole star products are non-alternative.
\newblock {\em J. High Energy Phys.}, (4):028, front matter + 17, 2017.

\bibitem{MR4177579}
Murray Bremner and Vladimir Dotsenko.
\newblock Distributive laws between the operads {L}ie and {C}om.
\newblock {\em Internat. J. Algebra Comput.}, 30(8):1565--1576, 2020.

\bibitem{MR3642294}
Murray~R. Bremner and Vladimir Dotsenko.
\newblock {\em Algebraic operads: an algorithmic companion}.
\newblock CRC Press, Boca Raton, FL, 2016.

\bibitem{MR1747916}
Ana Cannas~da Silva and Alan Weinstein.
\newblock {\em Geometric models for noncommutative algebras}, volume~10 of {\em
  Berkeley Mathematics Lecture Notes}.
\newblock American Mathematical Society, Providence, RI; Berkeley Center for
  Pure and Applied Mathematics, Berkeley, CA, 1999.

\bibitem{MR1877309}
Lorenzo Cornalba and Ricardo Schiappa.
\newblock Nonassociative star product deformations for {D}-brane world-volumes
  in curved backgrounds.
\newblock {\em Comm. Math. Phys.}, 225(1):33--66, 2002.

\bibitem{MR2320771}
Vladimir Dotsenko.
\newblock An operadic approach to deformation quantization of compatible
  {P}oisson brackets. {I}.
\newblock {\em J. Gen. Lie Theory Appl.}, 1(2):107--115, 2007.

\bibitem{MR3927168}
Vladimir Dotsenko.
\newblock Algebraic structures of {$F$}-manifolds via pre-{L}ie algebras.
\newblock {\em Ann. Mat. Pura Appl. (4)}, 198(2):517--527, 2019.

\bibitem{MR3302959}
Vladimir Dotsenko and James Griffin.
\newblock Cacti and filtered distributive laws.
\newblock {\em Algebr. Geom. Topol.}, 14(6):3185--3225, 2014.

\bibitem{MR2514853}
A.~S. Dzhumadildaev.
\newblock Anti-commutative algebras with skew-symmetric identities.
\newblock {\em J. Algebra Appl.}, 8(2):157--180, 2009.

\bibitem{MR116039}
Louis~A. Kokoris.
\newblock Nodal non-commutative {J}ordan algebras.
\newblock {\em Canadian J. Math.}, 12:488--492, 1960.

\bibitem{MR3298985}
Pavel~S. Kolesnikov, Leonid~G. Makar-Limanov, and Ivan~P. Shestakov.
\newblock The {F}reiheitssatz for generic {P}oisson algebras.
\newblock {\em SIGMA Symmetry Integrability Geom. Methods Appl.}, 10:Paper 115,
  15, 2014.

\bibitem{MR2062626}
Maxim Kontsevich.
\newblock Deformation quantization of {P}oisson manifolds.
\newblock {\em Lett. Math. Phys.}, 66(3):157--216, 2003.

\bibitem{MR3429384}
V.~G. Kupriyanov and D.~V. Vassilevich.
\newblock Nonassociative {W}eyl star products.
\newblock {\em J. High Energy Phys.}, (9):103, front matter+15, 2015.

\bibitem{MR4097911}
Jiefeng Liu, Yunhe Sheng, and Chengming Bai.
\newblock {$F$}-manifold algebras and deformation quantization via pre-{L}ie
  algebras.
\newblock {\em J. Algebra}, 559:467--495, 2020.

\bibitem{MR2954392}
Jean-Louis Loday and Bruno Vallette.
\newblock {\em Algebraic operads}, volume 346 of {\em Grundlehren der
  mathematischen Wissenschaften [Fundamental Principles of Mathematical
  Sciences]}.
\newblock Springer, Heidelberg, 2012.

\bibitem{MR2225770}
M.~Markl and E.~Remm.
\newblock Algebras with one operation including {P}oisson and other
  {L}ie-admissible algebras.
\newblock {\em J. Algebra}, 299(1):171--189, 2006.

\bibitem{MR1755366}
Ivan~P. Shestakov.
\newblock Speciality problem for {M}alcev algebras and {P}oisson {M}alcev
  algebras.
\newblock In {\em Nonassociative algebra and its applications ({S}\~{a}o
  {P}aulo, 1998)}, volume 211 of {\em Lecture Notes in Pure and Appl. Math.},
  pages 365--371. Dekker, New York, 2000.

\bibitem{shestakov2022nonmatrix}
Ivan~P. Shestakov and Vinicius~Souza Bittencourt.
\newblock Nonmatrix varieties of nonassociative algebras, 2022.

\bibitem{335295}
Jim Stasheff.
\newblock Non-associative deformation quantization.
\newblock MathOverflow.
\newblock \url{https://mathoverflow.net/q/335295} (version: 2020-02-28).

\bibitem{Szabo:2019uV}
Richard Szabo.
\newblock {An introduction to nonassociative physics}.
\newblock In {\em Proceedings of Corfu Summer Institute 2018 ``School and
  Workshops on Elementary Particle Physics and Gravity'' {\textemdash}
  PoS(CORFU2018)}, volume 347, page 100, 2019.

\bibitem{MR3846952}
Dmitri Vassilevich and Fernando Martins~Costa Oliveira.
\newblock Nearly associative deformation quantization.
\newblock {\em Lett. Math. Phys.}, 108(10):2293--2301, 2018.

\end{thebibliography}

\end{document}